\documentclass{article}
\usepackage{url}
\usepackage{physics}
\usepackage{graphicx}
\usepackage{longtable}
\usepackage{array}
\usepackage{ifthen}
\usepackage{physics}
\usepackage[english]{babel}
\usepackage{amsthm}
\usepackage{amsfonts}
\usepackage{authblk}
\usepackage[backend=biber, style=alphabetic]{biblatex}

\newtheorem{theorem}{Theorem}

\newtheorem{lemma}[theorem]{Lemma}

\title{On one-sided series of Bessel functions of the~first~kind:\\
$\displaystyle\sum_{m=0}^{\infty} \qty(\pm 1)^m J_{Nm+p}(x)$.
}
\author[1,2]{Matwey Kornilov\footnote{\protect\url{matwey@sai.msu.ru}}}
\affil[1]{Lomonosov Moscow State University}
\affil[2]{HSE University}
\date{}

\begin{document}

\newcommand{\JC}[1]{\ifthenelse{\equal{#1}{0}}
    {\ensuremath{J_{c_{0}}}}
    {\ensuremath{J_{c_{0,#1}}}}
}
\newcommand{\JS}[1]{\ifthenelse{\equal{#1}{0}}
    {\ensuremath{J_{s_{0}}}}
    {\ensuremath{J_{s_{0,#1}}}}
}

\newcolumntype{R}{>{$\displaystyle}r<{$}}
\newcolumntype{C}{>{$\displaystyle}c<{$}}
\newcolumntype{L}{>{$\displaystyle}l<{$}}

\maketitle

\begin{abstract}
We present generic expressions for one-sided series of Bessel functions of the first kind:
$\displaystyle\sum_{m=0}^{\infty} J_{Nm+p}(x)$ and $\displaystyle\sum_{m=0}^{\infty} (-1)^m J_{Nm+p}(x)$.
The result is expressed in terms of incomplete Lipschitz-Hankel integrals of Bessel functions of the first kind, which can be represented as Kamp\'e de F\'eriet functions.
\end{abstract}

\section{Introduction}

Series of Bessel functions $J_{\nu}(x)$ are common objects of study in the mathematical literature.
Series and integrals tables such as~\cite{Gradshteyn2014},~\cite{Abramowitz1965}, and~\cite{Prudnikov1986} provide results for series of the form $\sum_{m=0}^{\infty} (\pm 1)^m J_{Nm+p}(x)$ for specific values of $N$ and $p$.

Nevertheless, a generic closed-form expression valid for arbitrary $N$ and $p$ does not appear to be readily available in the standard references.
Moreover, computer algebra systems often fail to produce a closed-form result for general parameter values.
Probably, the difficulty is related to the fact that the natural closed form involves incomplete Lipschitz-Hankel integrals of Bessel functions, which belong to a class of special functions that is less widely implemented.
The present paper is motivated by the recent work~\cite{Sung2022}, where generic closed-form expressions for bilateral series of Bessel functions were derived.

The paper is organized as follows.
In Section~\ref{sec:plus_series} we derive a generic expression for $\sum_{m=0}^{\infty} \qty(\pm 1)^m J_{Nm+p}(x)$.
We inspect properties of the result and provide numerical validation of the obtained expressions in Section~\ref{sec:numerical_evaluation}.
Section~\ref{sec:conclusion} contains concluding remarks.

\section{Multisectioned series of Bessel functions}
\label{sec:plus_series}

We begin by considering the convergence of the function sequence itself.
Although this is not directly related to the series under consideration, the result will be reused later to interchange the order of integration and the limit.

\begin{lemma}
\label{lem:J_conv}
The function sequence $\left\{J_{Nm+p}\right\}_m$ converges uniformly to $0$ as $m \to \infty$ for $x \in \mathbb{R}$ and any $N \in \mathbb{N}$, $p \in \mathbb{Z}$.
\end{lemma}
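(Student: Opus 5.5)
We need to show $\sup_{x\in\mathbb{R}} |J_n(x)| \to 0$ as $n\to\infty$ for integer $n$; the statement then follows by taking $n = Nm+p$, since $Nm+p\to\infty$ when $N\ge 1$. Only finitely many $m$ give negative indices, and these do not affect the limit. The plan is to use a known uniform bound on integer-order Bessel functions. The cleanest is Landau's inequality $|J_n(x)| \le b\, n^{-1/3}$, valid for all real $x$ and all $n \geq 1$ with an absolute constant $b$. Quoting it settles the lemma immediately. To keep the paper self-contained, I would instead derive a weaker but sufficient uniform decay from the Bessel integral representation.

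Start from $J_n(x) = \frac{1}{\pi}\int_0^\pi \cos(n\theta - x\sin\theta)\,d\theta$, valid for integer $n$ and real $x$. Split $\mathbb{R}$ into two regimes relative to $n$.

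For $|x|\le n/2$, the phase $\phi(\theta)=n\theta - x\sin\theta$ satisfies $\phi'(\theta)=n-x\cos\theta \ge n/2$. A single integration by parts (writing $\cos\phi = (\sin\phi)'/\phi'$) bounds the integral by $C/n$. The derivative of $1/\phi'$ has a bounded total variation contribution, since $\phi'$ is monotone on subintervals. Alternatively, use the Kapteyn-type bound $|J_n(x)|\le \frac{(|x|/2)^n}{n!}e^{x^2/4}$... but the integration-by-parts route is uniform and simpler.

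For $|x|\ge n/2$, use the classical uniform bound $|J_\nu(x)|\le C|x|^{-1/3}$ for all $\nu\ge 0$ and $x>0$. This follows from van der Corput's lemma applied to the same integral: on $[0,\pi]$, at each point either $|\phi''| = |x\sin\theta|$ or $|\phi'''| = |x\cos\theta|$ is at least $|x|/\sqrt2$. Van der Corput with $k=2$ or $k=3$ on the corresponding subintervals, of which there are boundedly many, then gives $O(|x|^{-1/3})$, hence $O(n^{-1/3})$ in this regime.

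Combining the two regimes gives $\sup_x |J_n(x)|\le C n^{-1/3}\to 0$. Negative $x$ is handled by $J_n(-x)=(-1)^nJ_n(x)$, so the bounds need only be checked for $x\ge 0$. The main obstacle is the second regime near the turning point $x\approx n$, where the phase is stationary to second order and naive integration by parts fails. Van der Corput's $k=3$ estimate, or simply citing Landau's bound, is what resolves it. I would first try the citation and keep the van der Corput sketch as justification.
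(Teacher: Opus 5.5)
Your main route, quoting Landau's bound $|J_\nu(x)|\le b\,\nu^{-1/3}$ with $\nu=Nm+p$ and letting $m\to\infty$, is exactly the paper's proof. You are slightly more careful than the paper in three places. You explicitly set aside the finitely many $m$ with $Nm+p\le 0$, where the bound, stated only for $\nu>0$, does not apply. You reduce negative $x$ to positive $x$ via $J_n(-x)=(-1)^nJ_n(x)$. And you conclude directly from $\sup_x|J_{Nm+p}(x)|\to 0$, which is simply the definition of uniform convergence to $0$. The paper's appeal to ``the Dirichlet test'' is not the relevant tool, since that test concerns series.

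Your self-contained backup is also correct, and it is a genuinely more elementary alternative. Take $\phi(\theta)=n\theta-x\sin\theta$.
For $|x|\le n/2$ you have $\phi'\ge n/2$ and $|\phi''|=|x\sin\theta|\le n/2$. One integration by parts then gives $|J_n(x)|\le 2/n$; the boundary terms even vanish, because $\phi(0)=0$ and $\phi(\pi)=n\pi$.
For $|x|\ge n/2$, apply van der Corput with $k=2$ on $[\pi/4,3\pi/4]$, where $|\phi''|\ge|x|/\sqrt{2}$, and with $k=3$ on the two end intervals, where $|\phi'''|\ge|x|/\sqrt{2}$. This gives $O(|x|^{-1/3})=O(n^{-1/3})$.

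So the argument recovers the same $n^{-1/3}$ rate as Landau, only with a worse constant; it is not actually weaker, as you suggest. What it buys is independence from an external, nontrivial result. What it costs is length, whereas the paper's citation keeps the lemma to two lines.

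Do drop the aside about $\frac{(|x|/2)^n}{n!}e^{x^2/4}$. That bound is not uniform on $|x|\le n/2$: at $x=n/2$ the factor $e^{n^2/16}$ makes it blow up as $n\to\infty$. So it could not replace the integration-by-parts step.
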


\begin{proof}
For any $x \in \mathbb{R}$ and $\nu > 0$, the Bessel function satisfies the following inequality:
\begin{equation}
|J_\nu(x)| \le b \nu^{-1/3},
\end{equation}
where $b$ is an absolute constant.~\cite{Landau2000} Applying this bound with $\nu = Nm+p$ yields
\begin{equation}
|J_{Nm+p}(x)| \le b (Nm+p)^{-1/3}.
\end{equation}
Hence the sequence $\left\{J_{Nm+p}\right\}_m$ converges uniformly by the Dirichlet test.
\end{proof}

The next result provides the initial condition for the partial sums.

\begin{lemma}
\label{lem:S_zero}
For any $N \in \mathbb{N}$, $p \in \mathbb{Z}$, and $n \in \mathbb{N}$ the following holds:
\begin{equation}
\label{eq:S_zero}
\sum_{m=0}^{n} \qty(\pm 1)^m J_{Nm+p}(0) = \begin{cases}
1,&\text{when $p=0$;}\\
0,&\text{otherwise.}
\end{cases}
\end{equation}
\end{lemma}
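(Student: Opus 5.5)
The plan is to evaluate every term of the finite sum separately, using only the values of integer-order Bessel functions at the origin. No limiting argument is needed, since the sum is finite; in particular Lemma~\ref{lem:J_conv} plays no role here.

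First I would establish $J_k(0)=\delta_{k0}$ for every $k\in\mathbb{Z}$. For $k\ge 0$ this follows from the power series
\begin{equation*}
J_k(x)=\sum_{j=0}^{\infty}\frac{(-1)^j}{j!\,(j+k)!}\qty(\frac{x}{2})^{2j+k}.
\end{equation*}
At $x=0$ every term vanishes except the one with $2j+k=0$, which exists only for $k=0$ and equals $1$. For $k<0$ I would use the reflection formula $J_{k}(x)=(-1)^{k}J_{-k}(x)$, valid for integer order, to reduce to the case $-k>0$ and get $J_k(0)=0$.

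Second, I would look at the index set $\{Nm+p : 0\le m\le n\}$. By the first step, the $m$-th term $(\pm1)^mJ_{Nm+p}(0)$ is nonzero exactly when $Nm+p=0$, i.e.\ $m=-p/N$. If $p=0$, this happens only for $m=0$, and the term is $(\pm1)^0J_0(0)=1$; all other terms vanish, so the sum is $1$. If $p>0$, or more generally $p$ is not a nonpositive multiple of $N$, no index vanishes, so every term is $0$ and the sum is $0$. This gives both cases of~\eqref{eq:S_zero}.

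The only delicate point is the range of $p$. If $p=-kN$ with $k\in\mathbb{N}$ and $n\ge k$, the term with $m=k$ survives, and the sum equals $(\pm1)^k$ rather than $0$. So the statement as written needs $p>-N$; for instance, the natural normalization $0\le p<N$ is always reachable by shifting $m$ and discarding finitely many terms. I would state this restriction explicitly, or note that the general case gives $(\pm1)^{-p/N}$ when $N\mid p$ and $p\le 0$. Apart from this caveat, the argument is a direct evaluation.
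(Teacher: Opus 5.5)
Your proof is correct and follows the paper's route: you evaluate each term through $J_k(0)=\delta_{k0}$ from the power series and keep only the term with $Nm+p=0$. Your caveat is also right. The paper's step claiming that $Nm+p=0$ forces $p=0$ and $m=0$ fails for $p=-kN$ with $k\ge 1$ and $n\ge k$; for example $N=1$, $p=-1$, $n=1$ gives the sum $\pm 1$. So the lemma does need $p$ not to be a nonpositive multiple of $N$, which is harmless because the paper only uses $0\le p<N$. Your reflection-formula step also covers negative orders, which the paper's proof passes over.
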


\begin{proof}
The Bessel function admits the power series expansion
\begin{equation}
J_\nu(x) = \sum_{k=0}^{\infty} \frac{(-1)^k}{k! \, \Gamma(\nu+k+1)} \left(\frac{x}{2}\right)^{\nu+2k}.
\end{equation}
At $x=0$, all terms with $\nu>0$ vanish due to the factor $(x/2)^{\nu}$. The only non-zero contribution occurs when $\nu=0$, giving $J_0(0)=1$. Therefore,
\begin{equation}
J_{Nm+p}(0) = \begin{cases}
1, & Nm+p = 0,\\
0, & \text{otherwise}.
\end{cases}
\end{equation}
Since $N \ge 1$ and $m \ge 0$, the condition $Nm+p=0$ holds iff $p=0$ and $m=0$.
Consequently, equation~(\ref{eq:S_zero}) is true.
\end{proof}

Having established the function series convergence and the initial values, we now derive the differential equation governing the partial sums.

\begin{lemma}
\label{lem:S_Cauchi}
Let us denote the partial sums of the series under consideration as
\begin{equation}
S^{(\pm, n)}_{N,p}(x) \equiv \sum_{m=0}^{n} \qty(\pm 1)^m J_{Nm+p}(x),
\end{equation}
and the vector $\vb{S^{(\pm, n)}_N}$ collects the partial sums for all $0 \le p < N$:
\begin{equation}
\vb{S^{(\pm,n)}_N}(x) \equiv \mqty[S^{(\pm, n)}_{N,0}(x) & S^{(\pm, n)}_{N,1}(x) & \cdots & S^{(\pm, n)}_{N,N-1}(x)].
\end{equation}
Then for any $N \in \mathbb{N}$, $n \in \mathbb{N}$, and $x \ge 0$ the partial sum vector $\vb{S^{(\pm, n)}_N}(x)$ satisfies the Cauchy problem for the following system of nonhomogeneous ordinary differential equations:
\begin{equation}
\dv{\vb{S^{(\pm, n)}_N}(x)}{x} = A^{(\pm)}_N \vb{S^{(\pm, n)}_N}(x) + \vb{f^{(\pm)}_N}(x) + \qty(\pm 1)^n \vb{f^{(\pm, n)}_N}(x),
\end{equation}
with initial conditions
\begin{equation}
\vb{S^{(\pm, n)}_N}(0) = \vb{e_0},
\end{equation}
where
\begin{equation}
\label{eq:A}
A^{(\pm)}_N \equiv \mqty[
0            & -\frac{1}{2} &              &        & \pm\frac{1}{2} \\
\frac{1}{2}  & 0            & -\frac{1}{2} &        & \\
             & \frac{1}{2}  & \ddots       & \ddots & \\
             &              & \ddots       &        & -\frac{1}{2}\\
\mp\frac{1}{2} &              &              & \frac{1}{2} & 0],
\end{equation}
\begin{equation}
\vb{f^{(\pm)}_N}(x) \equiv -\frac{1}{2} J_1(x) \vb{e_0} \pm \frac{1}{2} J_0(x) \vb{e_{N-1}},
\end{equation}
\begin{equation}
\vb{f^{(\pm, n)}_N}(x) \equiv \mp \frac{1}{2} J_{N(n+1)-1}(x) \vb{e_0} -\frac{1}{2} J_{N(n+1)}(x) \vb{e_{N-1}},
\end{equation}
\begin{equation}
\vb{e_0} \equiv \mqty[1 & 0 & \cdots & 0],
\end{equation}
\begin{equation}
\vb{e_{N-1}} \equiv \mqty[0 & 0 & \cdots & 1].
\end{equation}
\end{lemma}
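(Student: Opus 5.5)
The plan is a direct computation from the standard Bessel recurrence
\begin{equation*}
J_\nu'(x) = \tfrac{1}{2}\qty(J_{\nu-1}(x) - J_{\nu+1}(x)),
\end{equation*}
valid for all integer $\nu$, together with $J_{-1} = -J_1$. The partial sums are finite, so term-by-term differentiation needs no justification. The initial condition $\vb{S^{(\pm, n)}_N}(0) = \vb{e_0}$ is exactly Lemma~\ref{lem:S_zero} applied componentwise for $0 \le p < N$. What remains is to identify $\dv{x} S^{(\pm,n)}_{N,p}$ with the $p$-th component of the right-hand side, one row at a time.

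\textbf{Interior rows.} Differentiating gives
\begin{equation*}
\dv{x} S^{(\pm,n)}_{N,p} = \tfrac12 \sum_{m=0}^{n} (\pm1)^m \qty(J_{Nm+p-1} - J_{Nm+p+1}).
\end{equation*}
For $1 \le p \le N-2$ the indices $Nm+p\pm1$ stay in the residue classes $p\pm1$ with the same $m$. Hence this is exactly $\tfrac12 S^{(\pm,n)}_{N,p-1} - \tfrac12 S^{(\pm,n)}_{N,p+1}$, which matches the tridiagonal part of $A^{(\pm)}_N$ with no forcing term.

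\textbf{Row $p=0$.} The term $J_{Nm-1}$ requires reindexing.
\begin{itemize}
\item For $m=0$ it equals $J_{-1} = -J_1$, which produces the $-\tfrac12 J_1 \vb{e_0}$ part of $\vb{f^{(\pm)}_N}$.
\item For $m \ge 1$, write $Nm-1 = N(m-1) + (N-1)$. Then
\begin{equation*}
\sum_{m=1}^{n} (\pm1)^m J_{N(m-1)+N-1} = \pm\qty(S^{(\pm,n)}_{N,N-1} - (\pm1)^n J_{N(n+1)-1}).
\end{equation*}
\end{itemize}
This yields the corner entry $\pm\tfrac12$ of $A^{(\pm)}_N$ and the boundary term $\mp\tfrac12 (\pm1)^n J_{N(n+1)-1}$ in $\vb{e_0}$.

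\textbf{Row $p=N-1$.} Here the term $J_{Nm+N} = J_{N(m+1)}$ is reindexed with $k = m+1$:
\begin{equation*}
\sum_{k=1}^{n+1} (\pm1)^{k-1} J_{Nk} = \pm\qty(S^{(\pm,n)}_{N,0} - J_0 + (\pm1)^{n+1} J_{N(n+1)}).
\end{equation*}
Multiplying by $-\tfrac12$ gives three contributions:
\begin{itemize}
\item the corner entry $\mp\tfrac12$;
\item the forcing term $\pm\tfrac12 J_0 \vb{e_{N-1}}$;
\item the term $-\tfrac12 (\pm1)^{n+2} J_{N(n+1)} = -\tfrac12 (\pm1)^n J_{N(n+1)}$, using $(\pm1)^2 = 1$.
\end{itemize}
These are precisely the remaining pieces of $\vb{f^{(\pm)}_N}$ and $\vb{f^{(\pm,n)}_N}$.

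\textbf{Main obstacle.} The only real difficulty is sign bookkeeping in the two boundary rows, plus the degenerate small cases $N=1$ and $N=2$. For $N=2$, rows $0$ and $N-1$ are adjacent, so the corner and tridiagonal entries of $A^{(\pm)}_N$ fall into the same position and must be added. For $N=1$, the single row receives both boundary contributions, and the matrix entries $-\tfrac12 \pm \tfrac12$ and $\tfrac12 \mp \tfrac12$ must likewise be summed. I would treat these cases by reading \eqref{eq:A} with overlapping entries added. I would then check that the same two reindexing computations go through unchanged, since they never used $N \ge 3$ except to separate the row labels.
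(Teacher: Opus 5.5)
Your proposal is correct and follows the same route as the paper. Both arguments differentiate termwise via the recurrence, shift indices in the interior rows, reindex the $J_{Nm-1}$ and $J_{Nm+N}$ sums in rows $0$ and $N-1$ (only $J_{-1}=-J_1$ is needed), and take the initial condition from Lemma~\ref{lem:S_zero}; your signs all check out, and your treatment of $N=1,2$ agrees with the special cases the paper lists after the lemma. For $N=1$, the entries $-\tfrac12$ and $\tfrac12$ do not actually arise, since both sums are absorbed by the two reindexings and give only $\pm\tfrac12\mp\tfrac12$, but they cancel, so your reading still gives the correct $A^{(\pm)}_1=0$.
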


\begin{proof}
Differentiating the definition of $S^{(\pm, n)}_{N,p}$ termwise and using the recurrence relation for Bessel functions,
\begin{equation}
\dv{J_\nu(x)}{x} = \frac{1}{2} J_{\nu-1}(x) - \frac{1}{2} J_{\nu+1}(x),
\end{equation}
we obtain
\begin{equation}
\dv{S^{(\pm, n)}_{N,p}(x)}{x} = \frac{1}{2} \sum_{m=0}^{n} \qty(\pm 1)^m \qty(J_{Nm+p-1}(x) - J_{Nm+p+1}(x)).
\end{equation}
For $0 < p < N-1$, shifting the summation index yields
\begin{equation}
\dv{S^{(\pm, n)}_{N,p}(x)}{x} = \frac{1}{2} S^{(\pm,n)}_{N,p-1}(x) - \frac{1}{2} S^{(\pm,n)}_{N,p+1}(x).
\end{equation}
The boundary cases $p=0$ and $p=N-1$ require special treatment because the indices $p-1$ and $p+1$ fall outside the range $0,\dots,N-1$. Using the relation $J_{-\nu}(x) = (-1)^\nu J_\nu(x)$, we obtain for $p=0$:
\begin{equation}
\dv{S^{(\pm,n)}_{N,0}(x)}{x} = -\frac{1}{2} S^{(\pm,n)}_{N,1}(x) \pm \frac{1}{2} S^{(\pm,n)}_{N,N-1}(x)
- \frac{1}{2} J_{1}(x) - \frac{\qty(\pm 1)}{2}^{n+1} J_{N(n+1)-1}(x),
\end{equation}
and for $p=N-1$:
\begin{equation}
\dv{S^{(\pm,n)}_{N,N-1}(x)}{x} = \frac{1}{2} S^{(\pm,n)}_{N,N-2}(x) \mp \frac{1}{2} S^{(\pm,n)}_{N,0}(x)
\pm \frac{1}{2} J_{0}(x) - \frac{\qty(\pm 1)}{2}^{n} J_{N(n+1)}(x).
\end{equation}
Collecting these equations into vector form gives the nonhomogeneous linear system
\begin{equation}
\dv{\vb{S^{(\pm,n)}_N}(x)}{x} = A^{(\pm)}_N \vb{S^{(\pm,n)}_N}(x) + \vb{f^{(\pm)}_N}(x) + \qty(\pm 1)^n \vb{f^{(\pm,n)}_N}(x),
\end{equation}
with $A^{(\pm)}_N$, $\vb{f^{(\pm)}_N}$, and $\vb{f^{(\pm,n)}_N}$ as defined in the statement. The initial condition follows directly from Lemma~\ref{lem:S_zero}.
\end{proof}

To clarify the notation of Lemma~\ref{lem:S_Cauchi}, consider the following special cases.
For $N=1$, $A^{(\pm)}_1=0,$
$\vb{f^{(\pm)}_1}(x) = \mqty[-\frac{1}{2} J_1(x) \pm \frac{1}{2} J_0(x)],$
and
$\vb{f^{(\pm,n)}_1}(x) = \mqty[\mp \frac{1}{2} J_{n}(x) -\frac{1}{2} J_{n+1}(x)]$.
For $N=2$, $A^{(+)}_2=0$ and
$$
A^{(-)}_2 = \mqty[0 & -1\\1 & 0].
$$

To solve the system from Lemma~\ref{lem:S_Cauchi}, the matrix exponential $\exp(x A^{(\pm)}_N)$ must be evaluated.
This requires knowledge of the eigenstructure of $A^{(\pm)}_N$.

\begin{lemma}
\label{lem:A_eigen}
Matrix~$A^{(\pm)}_N$ in equation~(\ref{eq:A}) has eigenvalues
\begin{equation}
\lambda^{(+)}_k = -i \sin(\frac{2\pi}{N}k),
\end{equation}
\begin{equation}
\lambda^{(-)}_k = -i \sin(\frac{2\pi}{N}\qty(k+\frac{1}{2})).
\end{equation}
with corresponding eigenvectors
\begin{equation}
\vb{v^{(+)}_k} = \frac{1}{\sqrt{N}} \mqty[1& \omega^k& \omega^{2k}& \cdots& \omega^{\qty(N-1)k}],\\
\end{equation}
\begin{equation}
\vb{v^{(-)}_k} = \frac{1}{\sqrt{N}} \mqty[1& \omega^{k+\frac{1}{2}}& \omega^{2\qty(k+\frac{1}{2})}& \cdots& \omega^{\qty(N-1)\qty(k+\frac{1}{2})}].
\end{equation}
where $\omega \equiv \exp(\frac{2\pi i}{N})$, $i$ denotes the imaginary unit, $0 \le k < N$.
The multiplicity of all eigenvalues is one.
\end{lemma}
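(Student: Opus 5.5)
The plan is to read $A^{(\pm)}_N$ as a (twisted) circulant operator and verify the eigenpairs directly, not through the characteristic polynomial. Comparing with equation~(\ref{eq:A}), for any vector $\vb{v}=[v_0,\dots,v_{N-1}]$ we have
\begin{equation*}
(A^{(\pm)}_N \vb{v})_p = \tfrac12 v_{p-1} - \tfrac12 v_{p+1}, \qquad 0\le p<N,
\end{equation*}
provided we extend the indices by the twisted periodicity $v_{p+N} = \pm v_p$. In particular $v_{-1}=\pm v_{N-1}$ and $v_N=\pm v_0$. The first step is to check this against the two corner rows of~(\ref{eq:A}). Row $0$ reads $-\frac12 v_1 \pm \frac12 v_{N-1}$, and row $N-1$ reads $\frac12 v_{N-2}\mp\frac12 v_0$. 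Both match the rule above, exactly as in the boundary computation in the proof of Lemma~\ref{lem:S_Cauchi}.

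Next I take the ansatz $v_p = z^p$ with $z^N=\pm1$, which automatically satisfies the twisted periodicity. Substituting gives
\begin{equation*}
(A^{(\pm)}_N\vb{v})_p = \tfrac12(z^{-1}-z)\,z^p ,
\end{equation*}
so $\vb{v}$ is an eigenvector with eigenvalue $\frac12(z^{-1}-z)$.

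The roots of $z^N=1$ are $z=\omega^k$, which give $\lambda^{(+)}_k=\frac12(\omega^{-k}-\omega^{k}) = -i\sin(2\pi k/N)$. The roots of $z^N=-1$ are $z=\omega^{k+1/2}$, since $\omega^{N(k+1/2)}=e^{i\pi(2k+1)}=-1$; these give $\lambda^{(-)}_k=-i\sin\bigl(\frac{2\pi}{N}(k+\frac12)\bigr)$. The normalisation $1/\sqrt N$ is then immediate, because every component has modulus one.

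To finish, I would show that the $N$ vectors $\vb{v^{(\pm)}_k}$, $0\le k<N$, form an orthonormal basis of $\mathbb{C}^N$. The inner product of two of them is $\frac1N\sum_p \omega^{p(k'-k)}$, and the half-shift cancels in the $(-)$ case. This is the usual geometric-sum identity, equal to $\delta_{kk'}$. Equivalently, the matrix with columns $\vb{v^{(-)}_k}$ is the unitary DFT matrix times the diagonal unitary $\mathrm{diag}(\omega^{p/2})$. Hence $A^{(\pm)}_N$ is unitarily diagonalizable with exactly these eigenpairs, and no other eigenvalues exist. This is also the structure needed later for $\exp(xA^{(\pm)}_N)$.

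The main obstacle is the statement ``multiplicity one''. Taken as distinctness of the eigenvalues, it fails in general: $\sin(2\pi k/N)=\sin(2\pi(N/2-k)/N)$. For instance, with $N=4$ the indices $k=0$ and $k=2$ both give $\lambda^{(+)}=0$, and for $N=2$ the matrix $A^{(+)}_2$ is zero. My plan is therefore to prove the correct version: each index $k$ contributes one eigenvector, the eigenvectors are linearly independent, and geometric and algebraic multiplicities coincide (the matrix is diagonalizable). I would flag that literal simplicity of the spectrum holds only when no two indices $k\ne k'$ satisfy $k+k'\equiv N/2 \pmod N$ in the $(+)$ case, or the analogous condition with the half-shift in the $(-)$ case. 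This diagonalizability is all that the subsequent matrix-exponential computation requires.
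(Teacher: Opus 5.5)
Your proof is correct and follows the same idea as the paper, which only observes that $A^{(+)}_N$ is circulant and $A^{(-)}_N$ is skew-circulant and cites the standard (twisted) Fourier diagonalization. Your ansatz $v_p=z^p$ with $z^N=\pm1$, together with the geometric-sum orthonormality check, is that standard argument written out in full. Your objection to ``multiplicity one'' is also right, and the paper's citation does not address it. The claim holds only for odd $N$ (and for $N=2$ in the $(-)$ case); it fails e.g.\ for $A^{(+)}_2=0$, or for $\lambda^{(-)}_0=\lambda^{(-)}_1=-i/\sqrt{2}$ when $N=4$. What the paper actually uses later is $A^{(\pm)}_N=V^{(\pm)}\Lambda^{(\pm)}V^{(\pm)H}$ with $V^{(\pm)}$ unitary, and that is exactly what you establish.
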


\begin{proof}
$A^{(+)}_N$ is a circulant matrix, each of whose rows is a cyclic shift of the row above to the right.
$A^{(-)}_N$ is a skew-circulant matrix, whose elements change sign upon a cyclic shift through the rightmost position.
Eigenvalues and eigenvectors for both kinds of matrices are well known; see, for instance,~\cite{Karner2003}.
\end{proof}

Note that $A^{(\pm)}_N$ is a real matrix, so each eigenvalue $\lambda^{(\pm)}_k$ is either zero or part of a complex conjugate pair.

With the eigendecomposition of $A^{(\pm)}_N$ at hand, the main result can now be stated.

\begin{theorem}
\label{theorem:series}
For any $N \in \mathbb{N}$, $0 \le p < N$,
\begin{multline}
\label{eq:series} 
S^{(\pm)}_{N,p}(x) \equiv \sum_{m=0}^{\infty} \qty(\pm 1)^m J_{Nm+p}(x) = \\
= \frac{\delta_{p0}}{2} J_0(x) + \frac{1}{2N} \sum_{k=0}^{N-1} \left(\cos(\frac{2\pi}{N} p c^{(\pm)}_k - x a^{(\pm)}_k) + \right.\\
\left.\cos(\frac{2\pi}{N} c^{(\pm)}_k) \qty(\JC0(a^{(\pm)}_k, x) \cos(\frac{2\pi}{N} p c^{(\pm)}_k - x a^{(\pm)}_k) + \JS0(a^{(\pm)}_k, x) \sin(x a^{(\pm)}_k - \frac{2\pi}{N} p c^{(\pm)}_k))\right),
\end{multline}
where $\delta_{ij}$ is a Kronecker delta symbol, $c_k^{(+)} = k,$ $c_k^{(-)} = k+\frac{1}{2},$
and $a^{(\pm)}_k \equiv \sin(\frac{2\pi}{N}c^{(\pm)}_k)$. $\JC0(a, x)$ and $\JS0(a, x)$ denote incomplete Lipschitz-Hankel integrals of Bessel functions:
\begin{equation}
\label{eq:jc0}
\JC0(a, x) = \int_0^x \cos(a t) J_0(t) dt,
\end{equation}
\begin{equation}
\label{eq:js0}
\JS0(a, x) = \int_0^x \sin(a t) J_0(t) dt.
\end{equation}
\end{theorem}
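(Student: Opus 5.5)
We solve the Cauchy problem of Lemma~\ref{lem:S_Cauchi} by variation of constants and then let $n\to\infty$. Since $A^{(\pm)}_N$ is constant, the solution is
\begin{equation*}
\vb{S^{(\pm,n)}_N}(x) = e^{xA^{(\pm)}_N}\vb{e_0} + \int_0^x e^{(x-t)A^{(\pm)}_N}\qty(\vb{f^{(\pm)}_N}(t) + \qty(\pm1)^n\vb{f^{(\pm,n)}_N}(t))\,dt .
\end{equation*}
The remainder term $\vb{f^{(\pm,n)}_N}$ only involves $J_{N(n+1)-1}$ and $J_{N(n+1)}$. By Lemma~\ref{lem:J_conv} these tend to zero uniformly on $[0,x]$. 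The eigenvalues in Lemma~\ref{lem:A_eigen} are purely imaginary and $A^{(\pm)}_N$ is normal (skew-symmetric), so $e^{(x-t)A}$ is unitary and hence bounded. Therefore the integral of the remainder tends to $0$ for fixed $x$, and we may pass to the limit under the integral. This gives $\vb{S^{(\pm)}_N}(x) = e^{xA}\vb{e_0} + \int_0^x e^{(x-t)A}\vb{f^{(\pm)}_N}(t)\,dt$. The same limit also shows that the series converges.

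Next, expand with the unitary eigenbasis of Lemma~\ref{lem:A_eigen}: $e^{xA}=\sum_k e^{x\lambda_k}\vb{v_k}\vb{v_k}^*$. The $p$-th component of $\vb{v_k}$ is $\omega^{pc_k}/\sqrt N$, and $\lambda_k=-ia_k$. The homogeneous part then equals
\begin{equation*}
\frac1N\sum_k e^{-ixa_k}\omega^{pc_k},
\end{equation*}
which is the real number $\frac1N\sum_k\cos(\frac{2\pi}{N}pc_k - xa_k)$. For the forcing term we need $\vb{v_k}^*\vb{f}(t) = \frac{1}{\sqrt N}\qty(-\frac12J_1(t) \pm \frac12 J_0(t)\omega^{-(N-1)c_k})$. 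Note that $\omega^{-(N-1)c_k}=\omega^{c_k}\omega^{-Nc_k}$, and $\omega^{-Nc_k}$ equals $1$ in the $+$ case and $-1$ in the $-$ case. So the $\pm$ sign is absorbed and the $J_0$ piece becomes $\frac12 J_0(t)\omega^{c_k}$. The forcing contribution to component $p$ is therefore
\begin{equation*}
\frac1N\sum_k\omega^{pc_k}\int_0^x e^{-i(x-t)a_k}\qty(\tfrac12\omega^{c_k}J_0(t)-\tfrac12J_1(t))\,dt .
\end{equation*}

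The $J_1$ part is the main obstacle, because the target formula contains only $J_0$ integrals, a free $\cos$ term and $\frac{\delta_{p0}}2J_0$. I would first use $J_1=-J_0'$ and integrate by parts. The boundary terms give $\frac12\qty(J_0(x)-e^{-ixa_k})$. Summed with $\frac1N\omega^{pc_k}$, and using $\sum_k\omega^{pc_k}=N\delta_{p0}$ (for the $-$ case $\omega^{p/2}\sum_k\omega^{pk}$, which is still $N\delta_{p0}$ for $0\le p<N$), this produces $\frac{\delta_{p0}}2J_0(x)$. It also cancels half of the homogeneous cosine term, which explains the overall coefficient $\frac1{2N}$. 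The remaining integral is $\frac{ia_k}{2}\int_0^x e^{-i(x-t)a_k}J_0(t)\,dt$. Combine it with the $\frac12\omega^{c_k}$ piece: since $\omega^{c_k} - i a_k\cdot(\ldots)$ gives $\cos(\frac{2\pi}{N}c_k)+i\sin-i\sin=\cos(\frac{2\pi}{N}c_k)$ (with the sign fixed during the calculation), the net factor is $\frac12\cos(\frac{2\pi}N c_k)$.

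Finally, write $e^{-i(x-t)a_k}=e^{-ixa_k}(\cos(a_kt)+i\sin(a_kt))$. This expresses the integrals through $J_{c_0}(a_k,x)$ and $J_{s_0}(a_k,x)$. Take real parts, which is justified because the sum is real, as the conjugate pairs $k\leftrightarrow N-k$ (respectively $N-1-k$) show. The result is $\cos(\theta)J_{c_0}+\sin(-\theta)\cdot(-J_{s_0})$ with $\theta=\frac{2\pi}Npc_k-xa_k$, which matches \eqref{eq:series}. The delicate points are the sign bookkeeping and checking that the imaginary parts cancel.
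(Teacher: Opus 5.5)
Your proposal is correct and follows the paper's proof essentially step for step: variation of constants, passing $n\to\infty$ via Lemma~\ref{lem:J_conv}, the eigendecomposition of $A^{(\pm)}_N$, integration by parts to trade $J_1$ for $J_0$ (equivalent to the paper's identity for $\int_0^x e^{iat}J_1(t)\,dt$), and taking real parts. Fix two sign slips. Integration by parts gives $-\frac{ia_k}{2}\int_0^x e^{-i(x-t)a_k}J_0(t)\,dt$, which is the sign your combination $\omega^{c_k}-ia_k=\cos(\frac{2\pi}{N}c_k)$ actually uses. Also, $\mathrm{Re}\bigl[e^{i\theta}(J_{c_0}+iJ_{s_0})\bigr]=\cos(\theta)J_{c_0}+\sin(-\theta)J_{s_0}$, with no extra minus on $J_{s_0}$; this is what matches \eqref{eq:series}.
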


\begin{proof}
The solution of the Cauchy problem from Lemma~\ref{lem:S_Cauchi} is given by the variation-of-constants formula:
\begin{equation}
\vb{S^{(\pm, n)}_N}(x) = \exp\qty(x A^{(\pm)}_N) \vb{S^{(\pm,n)}_N}(0) + \int_0^{x} dt \exp\qty((x-t) A^{(\pm)}_N) \qty(\vb{f^{(\pm)}_N}(t) + \qty(\pm 1)^n \vb{f^{(\pm,n)}_N}(t)),
\end{equation}
where $\exp\qty(x A^{(\pm)}_N)$ denotes the matrix exponential.
By Lemma~\ref{lem:J_conv}, $J_{N(n+1)}(x) \to 0$ and $J_{N(n+1)-1}(x) \to 0$ uniformly as $n \to \infty$, hence $\vb{f^{(\pm,n)}_N}(x) \to 0$ uniformly. Taking the limit $n \to \infty$ and denoting $\vb{S^{(\pm)}_N}(x) \equiv \lim_{n\to\infty} \vb{S^{(\pm,n)}_N}(x)$, we obtain
\begin{equation}
\label{eq:S_N}
\vb{S^{(\pm)}_N}(x) = \exp\qty(x A^{(\pm)}_N) \vb{S^{(\pm)}_N}(0) + \int_0^{x} dt \exp\qty((x-t) A^{(\pm)}_N) \vb{f^{(\pm)}_N}(t),
\end{equation}
The matrix exponential $\exp\qty(x A^{(\pm)}_N)$ is evaluated via the eigendecomposition from Lemma~\ref{lem:A_eigen}.
Let $V^{(\pm)}$ be the matrix whose columns are the eigenvectors $\vb{v^{(\pm)}_k}$ and $\Lambda^{(\pm)} = \operatorname{diag}(\lambda^{(\pm)}_0,\dots,\lambda^{(\pm)}_{N-1})$.
Then $A^{(\pm)}_N = V^{(\pm)} \Lambda^{(\pm)} V^{(\pm)H}$ and
\begin{equation}
\exp\qty(x A^{(\pm)}_N) = V^{(\pm)} \exp\qty(x \Lambda^{(\pm)}) V^{(\pm)H},
\end{equation}
where $V^{(\pm)H}$ denotes the Hermitian adjoint of $V^{(\pm)}$.

Then
\begin{equation}
\qty(V^{(\pm)} \exp\qty(x \Lambda^{(\pm)}) V^{(\pm)H} \vb{e_0})_p = \frac{1}{N} \sum_{k=0}^{N-1} \exp(\frac{2\pi i}{N} p c^{(\pm)}_k - i a^{(\pm)}_k x),
\end{equation}
\begin{equation}
\qty(V^{(\pm)} \exp\qty(x \Lambda^{(\pm)}) V^{(\pm)H} \vb{e_{N-1}})_p = \pm \frac{1}{N} \sum_{k=0}^{N-1} \exp(\frac{2\pi i}{N} p c^{(\pm)}_k - i a^{(\pm)}_k x + \frac{2\pi i}{N} c^{(\pm)}_k),
\end{equation}
where $c^{(\pm)}_k$ and $a^{(\pm)}_k$ were defined in the statement of Theorem~\ref{theorem:series}.

The following identity will also be needed:
\begin{equation}
\int_0^x \exp(i a t) J_1(t) dt = 1 - \exp(iax) J_0(x) + i a \int_0^x \exp(i a t) J_0(t) dt.
\end{equation}

Equation~(\ref{eq:S_N}) is then rewritten into the form of equation~(\ref{eq:series}).
Since $A^{(\pm)}_N$ is a real matrix, only the real part of each complex exponential contributes, as noted in Lemma~\ref{lem:A_eigen}.

\end{proof}

Note that the incomplete Lipschitz-Hankel integrals of Bessel functions $\JC0(a, x)$ and $\JS0(a, x)$ generally cannot be reduced to more familiar functions and are therefore treated as a separate class of special functions~\cite{Agrest1971}.
It has also been demonstrated that incomplete Lipschitz-Hankel integrals of Bessel functions can be expressed in terms of incomplete Bessel and Struve functions~\cite{Agrest1971}.

\begin{lemma}
\label{lem:ilhi}
\begin{equation}
\JC0(-a, x) = \JC0(a, x),
\end{equation}
\begin{equation}
\label{eq:jc0_mx}
\JC0(a, -x) = -\JC0(a, x),
\end{equation}
\begin{equation}
\JS0(-a, x) = -\JS0(a, x),
\end{equation}
\begin{equation}
\label{eq:js0_mx}
\JS0(a, -x) = \JS0(a, x).
\end{equation}
\end{lemma}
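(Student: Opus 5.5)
The four identities follow directly from the integral definitions~(\ref{eq:jc0}) and~(\ref{eq:js0}), using the parity of the integrand factors and a change of variables.

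\emph{Parity in $a$.} For the first and third identities, $\cos(-at)=\cos(at)$ and $\sin(-at)=-\sin(at)$ hold pointwise in $t$. Inserting these into the integrands gives $\JC0(-a,x)=\JC0(a,x)$ and $\JS0(-a,x)=-\JS0(a,x)$ by linearity of the integral. No property of $J_0$ is needed here.

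\emph{Parity in $x$.} For the identities~(\ref{eq:jc0_mx}) and~(\ref{eq:js0_mx}), write $\JC0(a,-x)=\int_0^{-x}\cos(at)J_0(t)\,dt$ and substitute $t=-u$, so $dt=-du$ and the limits become $0$ to $x$. This gives
\begin{equation}
\JC0(a,-x) = -\int_0^{x}\cos(-au)J_0(-u)\,du .
\end{equation}
We now need $J_0(-u)=J_0(u)$. This follows from the power series used in Lemma~\ref{lem:S_zero}: with $\nu=0$ only even powers $u^{2k}$ occur. Equivalently, it follows from $J_n(-x)=(-1)^nJ_n(x)$ for integer $n$. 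Combined with the evenness of cosine, the integrand is even, so $\JC0(a,-x)=-\JC0(a,x)$. For the sine integral the same substitution produces the factor $-1$ from $dt$ and a second factor $-1$ from $\sin(-au)=-\sin(au)$, with $J_0$ unchanged. These cancel, giving $\JS0(a,-x)=\JS0(a,x)$.

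\emph{Validity of the substitution.} The integrals are defined for negative upper limit in the oriented sense $\int_0^{-x}=-\int_{-x}^0$, so the substitution is legitimate for all real $x$. Both integrands are continuous, since $J_0$ is entire, so no convergence issues arise.

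There is no genuine obstacle in this argument. The only point needing care is using the evenness of $J_0$ and tracking the orientation sign from $dt=-du$, which determines whether each identity ends up even or odd in $x$.
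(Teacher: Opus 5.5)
Your proof is correct and follows the same route as the paper, which simply invokes the integral definitions together with the parity of sine, cosine, and $J_0$. You additionally spell out the substitution $t=-u$ and the sign bookkeeping that the paper leaves implicit.
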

\begin{proof}
The result follows from the definitions~(\ref{eq:jc0})--(\ref{eq:js0}) and the symmetry properties of the sine, cosine, and $J_0$ functions.
\end{proof}

For reference, we provide explicit results for $N$ up to $6$ in Table~\ref{tab:neumann_1} for $\sum_{m=0}^{\infty} J_{Nm+p}(x)$
and in Table~\ref{tab:neumann_2} for $\sum_{m=0}^{\infty} \qty(- 1)^m J_{Nm+p}(x)$.
Lemma~\ref{lem:ilhi} has been used to simplify the final expressions.

\begingroup
\renewcommand{\arraystretch}{2.25}

\begin{longtable}{|c|c|L|}
    \caption{Multisectioned series of Bessel functions.} \label{tab:neumann_1} \\

    \hline
    $N$ & $p$ & S_{N,p} \\ \hline
    \endfirsthead

    \hline
    \multicolumn{3}{|c|}{Continued from previous page} \\ \hline
    $N$ & $p$ & S_{N,p} \\ \hline
    \endhead

    \hline
    \multicolumn{3}{|r|}{Continued on next page...} \\ \hline
    \endfoot

    \hline
    \endlastfoot

    $1$ & $0$ & \frac{1}{2} \qty(\JC0(0,x)+J_0(x)+1) \hfill\text{(5.7.1)~.1 in \cite{Prudnikov1986}} \\

    $2$ & $0$ & \frac{1}{2} \qty(J_0(x)+1) \hfill\text{(8.512~1) in \cite{Gradshteyn2014}, (9.1.46) in \cite{Abramowitz1965}, (5.7.1)~.14 in \cite{Prudnikov1986}}\\
    $2$ & $1$ & \frac{1}{2} \JC0(0,x) \hfill\text{(8.514~7) in \cite{Gradshteyn2014}, (5.7.1)~.15 in \cite{Prudnikov1986}} \\

    $3$ & $0$ & \frac{1}{6} \qty(\JC0(0,x)-\cos(\frac{\sqrt{3} x}{2}) \qty(\JC0\qty(\frac{\sqrt{3}}{2},x)-2)-\sin(\frac{\sqrt{3}x}{2}) \JS0\qty(\frac{\sqrt{3}}{2},x)+3 J_0(x)+1)\\
    $3$ & $1$ & \frac{1}{6} \qty(\JC0(0,x)+\sin(\frac{\pi}{6} - \frac{\sqrt{3}x}{2}) \qty(\JC0\qty(\frac{\sqrt{3}}{2},x)-2)+\cos(\frac{\pi}{6} -\frac{\sqrt{3} x}{2}) \JS0\qty(\frac{\sqrt{3}}{2},x)+1) \\
    $3$ & $2$ & \frac{1}{6} \qty(\JC0(0,x)+\sin(\frac{\sqrt{3}x}{2}+\frac{\pi}{6}) \qty(\JC0\qty(\frac{\sqrt{3}}{2},x)-2)-\cos(\frac{\sqrt{3} x}{2}+\frac{\pi}{6}) \JS0\qty(\frac{\sqrt{3}}{2},x)+1) \\

    $4$ & $0$ & \frac{1}{4} \qty(\cos (x)+2 J_0(x)+1) \hfill\text{(5.7.1)~.19 in \cite{Prudnikov1986}}\\
    $4$ & $1$ & \frac{1}{4} \qty(\JC0(0,x)+\sin(x)) \\
    $4$ & $2$ & \frac{1}{4} \qty(1-\cos (x)) \hfill\text{(5.7.1)~.20 in \cite{Prudnikov1986}} \\
    $4$ & $3$ & \frac{1}{4} \qty(\JC0(0,x)-\sin(x)) \\

    $5$ & $0$ & \begin{aligned}
\frac{1}{40} & \left(\qty(\sqrt{5}-1) \sin(x\sin(\frac{2\pi}{5})) \qty(\JC0\qty(\sin(\frac{2\pi}{5}),x)+\JS0\qty(\sin(\frac{2\pi}{5}),x))\right.\\
&+ \left.\qty(1+\sqrt{5}) \left(-\sin(x\sin(\frac{\pi}{5})) \qty(\JC0\qty(\sin(\frac{\pi}{5}),x)+\JS0\qty(\sin(\frac{\pi}{5}),x))\right.\right.\\
&\qquad- \left.\left.2\cos(x\sin(\frac{\pi}{5})) \JC0\qty(\sin(\frac{\pi}{5}),x)\right)\right.\\
&+ \left.2\cos(x\sin(\frac{2\pi}{5})) \qty(\qty(\sqrt{5}-1) \JC0\qty(\sin(\frac{2\pi}{5}),x)+4)\right.\\
&+ \left.4\JC0(0,x)+8 \cos(x\sin(\frac{\pi}{5}))+20 J_0(x)+4\right)
\end{aligned}\\
    $5$ & $1$ & \begin{aligned}
\frac{1}{40} & \left(\qty(1+\sqrt{5}) \left(\sin(x\sin(\frac{\pi}{5})+\frac{\pi}{5}) \qty(\JC0\qty(\sin(\frac{\pi}{5}),x)+\JS0\qty(\sin(\frac{\pi}{5}),x))\right.\right.\\
&\qquad+ \left.\left.2\cos(x\sin(\frac{\pi}{5})+\frac{\pi}{5}) \JC0\qty(\sin(\frac{\pi}{5}),x)\right)\right.\\
&+ \left.\qty(\sqrt{5}-1) \left(2 \sin(x\sin(\frac{2\pi}{5})+\frac{\pi}{10}) \JC0\qty(\sin(\frac{2\pi}{5}),x)\right.\right.\\
&\qquad- \left.\left.\cos(x\sin(\frac{2\pi}{5})+\frac{\pi}{10}) \qty(\JC0\qty(\sin(\frac{2\pi}{5}),x)+\JS0\qty(\sin(\frac{2\pi}{5}),x))\right)\right.\\
&+ \left.4\JC0(0,x)+8 \sin(x\sin(\frac{2\pi}{5})+\frac{\pi}{10})-8 \cos(x\sin(\frac{\pi}{5})+\frac{\pi}{5})+4\right)
\end{aligned}\\
    $5$ & $2$ &\begin{aligned}
\frac{1}{40} & \left(-\qty(\sqrt{5}-1) \sin(x\sin(\frac{2\pi}{5})+\frac{\pi}{5}) \qty(\JC0\qty(\sin(\frac{2\pi}{5}),x)+\JS0\qty(\sin(\frac{2\pi}{5}),x))\right.\\
&- \left.\qty(1+\sqrt{5}) \left(-\cos(\frac{\pi}{10}-x\sin(\frac{\pi}{5})) \qty(\JC0\qty(\sin(\frac{\pi}{5}),x)+\JS0\qty(\sin(\frac{\pi}{5}),x))\right.\right.\\
&\qquad- \left.\left.2\sin(\frac{\pi}{10}-x\sin(\frac{\pi}{5})) \JC0\qty(\sin(\frac{\pi}{5}),x)\right)\right.\\
&+ \left.2\cos(x\sin(\frac{2\pi}{5})+\frac{\pi}{5}) \qty(-\qty(\qty(\sqrt{5}-1) \JC0\qty(\sin(\frac{2\pi}{5}),x))-4)\right.\\
&+ \left.4\JC0(0,x)+8 \sin(\frac{\pi}{10}-x\sin(\frac{\pi}{5}))+4\right)
\end{aligned}\\
    $5$ & $3$ &\begin{aligned}
\frac{1}{40} & \left(\qty(1+\sqrt{5}) \left(\cos(x\sin(\frac{\pi}{5})+\frac{\pi}{10}) \qty(\JC0\qty(\sin(\frac{\pi}{5}),x)+\JS0\qty(\sin(\frac{\pi}{5}),x))\right.\right.\\
&\qquad- \left.\left.2\sin(x\sin(\frac{\pi}{5})+\frac{\pi}{10}) \JC0\qty(\sin(\frac{\pi}{5}),x)\right)\right.\\
&+ \left.\qty(\sqrt{5}-1) \left(\sin(\frac{\pi}{5}-x\sin(\frac{2\pi}{5})) \qty(\JC0\qty(\sin(\frac{2\pi}{5}),x)+\JS0\qty(\sin(\frac{2\pi}{5}),x))\right.\right.\\
&\qquad- \left.\left.2\cos(\frac{\pi}{5}-x\sin(\frac{2\pi}{5})) \JC0\qty(\sin(\frac{2\pi}{5}),x)\right)\right.\\
&+ \left.4\JC0(0,x)+8 \sin(x\sin(\frac{\pi}{5})+\frac{\pi}{10})-8\cos(\frac{\pi}{5}-x\sin(\frac{2\pi}{5}))+4\right)
\end{aligned}\\
    $5$ & $4$ &\begin{aligned}
\frac{1}{40} & \left(\qty(1+\sqrt{5}) \left(2 \cos(\frac{\pi}{5}-x\sin(\frac{\pi}{5})) \JC0\qty(\sin(\frac{\pi}{5}),x)\right.\right.\\
&\qquad- \left.\left.\sin(\frac{\pi}{5}-x\sin(\frac{\pi}{5})) \qty(\JC0\qty(\sin(\frac{\pi}{5}),x)+\JS0\qty(\sin(\frac{\pi}{5}),x))\right)\right.\\
&+ \left.\qty(\sqrt{5}-1) \left(\cos(\frac{\pi}{10}-x\sin(\frac{2\pi}{5})) \qty(\JC0\qty(\sin(\frac{2\pi}{5}),x)+\JS0\qty(\sin(\frac{2\pi}{5}),x))\right.\right.\\
&\qquad+ \left.\left.2\sin(\frac{\pi}{10}-x\sin(\frac{2\pi}{5})) \JC0\qty(\sin(\frac{2\pi}{5}),x)\right)\right.\\
&+ \left.4\JC0(0,x)+8 \sin(\frac{\pi}{10}-x\sin(\frac{2\pi}{5}))-8\cos(\frac{\pi}{5}-x\sin(\frac{\pi}{5}))+4\right)
\end{aligned}\\

    $6$ & $0$ & \frac{1}{6} \qty(2 \cos(\frac{\sqrt{3} x}{2})+3 J_0(x)+1)\\
    $6$ & $1$ & \frac{1}{12} \qty(2 \JC0(0,x)+\cos(\frac{\sqrt{3} x}{2}) \JC0\qty(\frac{\sqrt{3}}{2},x)+\sin(\frac{\sqrt{3}x}{2}) \qty(\JS0\qty(\frac{\sqrt{3}}{2},x)+2 \sqrt{3})) \\
    $6$ & $2$ & \frac{1}{12} \qty(\sqrt{3}\sin(\frac{\sqrt{3}x}{2}) \JC0\qty(\frac{\sqrt{3}}{2},x)-\cos(\frac{\sqrt{3} x}{2}) \qty(\sqrt{3} \JS0\qty(\frac{\sqrt{3}}{2},x)+2)+2) \\
    $6$ & $3$ & \frac{1}{6} \qty(\JC0(0,x)-\cos(\frac{\sqrt{3} x}{2}) \JC0\qty(\frac{\sqrt{3}}{2},x)-\sin(\frac{\sqrt{3}x}{2}) \JS0\qty(\frac{\sqrt{3}}{2},x)) \\
    $6$ & $4$ & \frac{1}{12} \qty(-\sqrt{3} \sin(\frac{\sqrt{3}x}{2}) \JC0\qty(\frac{\sqrt{3}}{2},x)+\cos(\frac{\sqrt{3} x}{2}) \qty(\sqrt{3} \JS0\qty(\frac{\sqrt{3}}{2},x)-2)+2) \\
    $6$ & $5$ & \frac{1}{12} \qty(2 \JC0(0,x)+\cos(\frac{\sqrt{3} x}{2}) \JC0\qty(\frac{\sqrt{3}}{2},x)+\sin(\frac{\sqrt{3}x}{2}) \qty(\JS0\qty(\frac{\sqrt{3}}{2},x)-2 \sqrt{3}))

\end{longtable}

\begin{longtable}{|c|c|L|}
    \caption{Alternating multisectioned series of Bessel functions} \label{tab:neumann_2} \\
    
    \hline
    $N$ & $p$ & S_{N,p} \\ \hline
    \endfirsthead

    \hline
    \multicolumn{3}{|c|}{Continued from previous page} \\ \hline
    $N$ & $p$ & S_{N,p} \\ \hline
    \endhead

    \hline
    \multicolumn{3}{|r|}{Continued on next page...} \\ \hline
    \endfoot

    \hline
    \endlastfoot

    $1$ & $0$ & \frac{1}{2} \qty(-\JC0(0,x)+J_0(x)+1) \hfill\text{(5.7.1)~.1 in \cite{Prudnikov1986}}\\

    $2$ & $0$ & \frac{1}{2} \qty(\cos (x)+J_0(x)) \hfill\text{(8.514 2) in \cite{Gradshteyn2014}, (9.1.47) in \cite{Abramowitz1965}, (5.7.1)~.14 in \cite{Prudnikov1986}}\\
    $2$ & $1$ & \frac{1}{2} \sin (x) \hfill\text{(8.514 1) in \cite{Gradshteyn2014}, (9.1.48) in \cite{Abramowitz1965}, (5.7.1)~.15 in \cite{Prudnikov1986}}\\

    $3$ & $0$ & \frac{1}{6} \qty(-\JC0(0,x)+\cos(\frac{\sqrt{3}x}{2}) \qty(\JC0\qty(\frac{\sqrt{3}}{2},x)+2)+\sin(\frac{\sqrt{3}x}{2}) \JS0\qty(\frac{\sqrt{3}}{2},x)+3 J_0(x)+1) \\
    $3$ & $1$ & \frac{1}{6} \qty(\JC0(0,x)+\sin(\frac{\sqrt{3}x}{2}+\frac{\pi}{6}) \qty(\JC0\qty(\frac{\sqrt{3}}{2},x)+2)-\cos(\frac{\sqrt{3}x}{2}+\frac{\pi}{6}) \JS0\qty(\frac{\sqrt{3}}{2},x)-1) \\
    $3$ & $2$ & \frac{1}{6} \qty(-\JC0(0,x)-\sin(\frac{\pi}{6}-\frac{\sqrt{3}x}{2}) \qty(\JC0\qty(\frac{\sqrt{3}}{2},x)+2)-\cos(\frac{\pi}{6}-\frac{\sqrt{3}x}{2}) \JS0\qty(\frac{\sqrt{3}}{2},x)+1) \\

    $4$ & $0$ & \frac{1}{2} \qty(\cos(\frac{x}{\sqrt{2}})+J_0(x)) \hfill\text{(5.7.1)~.19 in \cite{Prudnikov1986}}\\
    $4$ & $1$ & \frac{1}{4} \qty(\cos(\frac{x}{\sqrt{2}}) \JC0\qty(\frac{1}{\sqrt{2}},x)+\sin(\frac{x}{\sqrt{2}}) \qty(\JS0\qty(\frac{1}{\sqrt{2}},x)+\sqrt{2})) \\
    $4$ & $2$ & \frac{1}{2\sqrt{2}}\qty(\sin(\frac{x}{\sqrt{2}}) \JC0\qty(\frac{1}{\sqrt{2}},x)-\cos(\frac{x}{\sqrt{2}}) \JS0\qty(\frac{1}{\sqrt{2}},x)) \\
    $4$ & $3$ & \frac{1}{4} \qty(\sin(\frac{x}{\sqrt{2}}) \qty(\sqrt{2}-\JS0\qty(\frac{1}{\sqrt{2}},x))-\cos(\frac{x}{\sqrt{2}}) \JC0\qty(\frac{1}{\sqrt{2}},x)) \\

    $5$ & $0$ & \begin{aligned}
\frac{1}{40} & \left(-\qty(\sqrt{5}-1) \sin(x\sin(\frac{2\pi}{5})) \qty(\JC0\qty(\sin(\frac{2\pi}{5}),x)+\JS0\qty(\sin(\frac{2\pi}{5}),x))\right.\\
& +\left.\qty(1+\sqrt{5}) \left(\sin(x\sin(\frac{\pi}{5})) \qty(\JC0\qty(\sin(\frac{\pi}{5}),x)+\JS0\qty(\sin(\frac{\pi}{5}),x))\right.\right.\\
& \qquad\left.\left.+2\cos(x\sin(\frac{\pi}{5})) \JC0\qty(\sin(\frac{\pi}{5}),x)\right)\right.\\
& +2\left.\cos(x\sin(\frac{2\pi}{5})) \qty(4-\qty(\sqrt{5}-1) \JC0\qty(\sin(\frac{2\pi}{5}),x))\right.\\
& \left.-4 \JC0(0,x)+8 \cos(x\sin(\frac{\pi}{5}))+20 J_0(x)+4\right)
\end{aligned}\\
    $5$ & $1$ & \begin{aligned}
\frac{1}{40} & \left(\qty(1+\sqrt{5}) \left(2 \cos(\frac{\pi}{5}-x\sin(\frac{\pi}{5})) \JC0\qty(\sin(\frac{\pi}{5}),x)\right.\right.\\
& \qquad\left.\left.-\sin(\frac{\pi}{5}-x\sin(\frac{\pi}{5})) \qty(\JC0\qty(\sin(\frac{\pi}{5}),x)+\JS0\qty(\sin(\frac{\pi}{5}),x))\right)\right.\\
& +\left.\qty(\sqrt{5}-1) \left(\cos(\frac{\pi}{10}-x\sin(\frac{2\pi}{5})) \qty(\JC0\qty(\sin(\frac{2\pi}{5}),x)+\JS0\qty(\sin(\frac{2\pi}{5}),x))\right.\right.\\
& \qquad\left.\left.+2\sin(\frac{\pi}{10}-x\sin(\frac{2\pi}{5})) \JC0\qty(\sin(\frac{2\pi}{5}),x)\right)\right.\\
& +\left.4\JC0(0,x)-8 \sin(\frac{\pi}{10}-x\sin(\frac{2\pi}{5}))+8 \cos(\frac{\pi}{5}-x\sin(\frac{\pi}{5}))-4\right)
\end{aligned}\\
    $5$ & $2$ & \begin{aligned}
\frac{1}{40} & \left(\qty(1+\sqrt{5}) \left(2 \sin(x\sin(\frac{\pi}{5})+\frac{\pi}{10}) \JC0\qty(\sin(\frac{\pi}{5}),x)\right.\right.\\
& \qquad\left.\left.-\cos(x\sin(\frac{\pi}{5})+\frac{\pi}{10}) \qty(\JC0\qty(\sin(\frac{\pi}{5}),x)+\JS0\qty(\sin(\frac{\pi}{5}),x))\right)\right.\\
& \left.+\qty(\sqrt{5}-1) \left(2 \cos(\frac{\pi}{5}-x\sin(\frac{2\pi}{5})) \JC0\qty(\sin(\frac{2\pi}{5}),x)\right.\right.\\
& \qquad\left.\left.-\sin(\frac{\pi}{5}-x\sin(\frac{2\pi}{5})) \qty(\JC0\qty(\sin(\frac{2\pi}{5}),x)+\JS0\qty(\sin(\frac{2\pi}{5}),x))\right)\right.\\
& \left.-4 \JC0(0,x)+8 \sin(x\sin(\frac{\pi}{5})+\frac{\pi}{10})-8 \cos(\frac{\pi}{5}-x\sin(\frac{2\pi}{5}))+4\right)
\end{aligned}\\
    $5$ & $3$ & \begin{aligned}
\frac{1}{40} & \left(-\qty(\sqrt{5}-1) \sin(x\sin(\frac{2\pi}{5})+\frac{\pi}{5}) \qty(\JC0\qty(\sin(\frac{2\pi}{5}),x)+\JS0\qty(\sin(\frac{2\pi}{5}),x))\right.\\
& +\left.\qty(1+\sqrt{5}) \left(-\cos(\frac{\pi}{10}-x\sin(\frac{\pi}{5})) \qty(\JC0\qty(\sin(\frac{\pi}{5}),x)+\JS0\qty(\sin(\frac{\pi}{5}),x))\right.\right.\\
& \qquad\left.\left.-2\sin(\frac{\pi}{10}-x\sin(\frac{\pi}{5})) \JC0\qty(\sin(\frac{\pi}{5}),x)\right)\right.\\
& +\left.2\cos(x\sin(\frac{2\pi}{5})+\frac{\pi}{5}) \qty(4-\qty(\sqrt{5}-1) \JC0\qty(\sin(\frac{2\pi}{5}),x))\right.\\
& +\left.4\JC0(0,x)-8 \sin(\frac{\pi}{10}-x\sin(\frac{\pi}{5}))-4\right)
\end{aligned}\\
    $5$ & $4$ & \begin{aligned}
\frac{1}{40} & \left(\qty(1+\sqrt{5}) \left(-\sin(x\sin(\frac{\pi}{5})+\frac{\pi}{5}) \qty(\JC0\qty(\sin(\frac{\pi}{5}),x)+\JS0\qty(\sin(\frac{\pi}{5}),x))\right.\right.\\
& \qquad\left.\left.-2\cos(x\sin(\frac{\pi}{5})+\frac{\pi}{5}) \JC0\qty(\sin(\frac{\pi}{5}),x)\right)\right.\\
& \left.+\qty(\sqrt{5}-1) \left(\cos(x\sin(\frac{2\pi}{5})+\frac{\pi}{10}) \qty(\JC0\qty(\sin(\frac{2\pi}{5}),x)+\JS0\qty(\sin(\frac{2\pi}{5}),x))\right.\right.\\
& \qquad\left.\left.-2\sin(x\sin(\frac{2\pi}{5})+\frac{\pi}{10}) \JC0\qty(\sin(\frac{2\pi}{5}),x)\right)\right.\\
& -\left.4\JC0(0,x)+8 \sin(x\sin(\frac{2\pi}{5})+\frac{\pi}{10})-8 \cos(x\sin(\frac{\pi}{5})+\frac{\pi}{5})+4\right)
\end{aligned}\\

    $6$ & $0$ & \frac{1}{6} \qty(2 \cos(\frac{x}{2})+\cos (x)+3 J_0(x)) \\
    $6$ & $1$ & \frac{1}{12} \qty(3 \cos(\frac{x}{2}) \JC0\qty(\frac{1}{2},x)+\sin(\frac{x}{2}) \qty(3 \JS0\qty(\frac{1}{2},x)+2)+2 \sin (x)) \\
    $6$ & $2$ & \frac{1}{12} \qty(3 \sin(\frac{x}{2}) \JC0\qty(\frac{1}{2},x)+\cos(\frac{x}{2}) \qty(2-3 \JS0\qty(\frac{1}{2},x))-2 \cos (x)) \\
    $6$ & $3$ & \frac{4}{3} \sin[3](\frac{x}{4}) \cos(\frac{x}{4}) \\
    $6$ & $4$ & \frac{1}{12} \qty(3 \sin(\frac{x}{2}) \JC0\qty(\frac{1}{2},x)-\cos(\frac{x}{2}) \qty(3 \JS0\qty(\frac{1}{2},x)+2)+2 \cos (x)) \\
    $6$ & $5$ & \frac{1}{12} \qty(-3 \cos(\frac{x}{2}) \JC0\qty(\frac{1}{2},x)+\sin(\frac{x}{2}) \qty(2-3 \JS0\qty(\frac{1}{2},x))+2 \sin (x))
\end{longtable}

\endgroup

\section{Validation and evaluation}
\label{sec:numerical_evaluation}

We begin this section with the following useful lemma.
\begin{lemma}[Alternative form for the solution]
\label{lem:series_alt}
For any $N \in \mathbb{N}$, $0 \le p < N$,
\begin{multline}
\label{eq:series_alt}
S^{(\pm)}_{N,p}(x) \equiv \sum_{m=0}^{\infty} \qty(\pm 1)^m J_{Nm+p}(x) = \\
= \frac{\delta_{p0}}{2} J_0(x) + \frac{\qty(-1)^p}{2N} \sum_{k=0}^{N-1} \left(\cos(\frac{2\pi}{N} p c^{(\sigma)}_k + x a^{(\sigma)}_k)  \right.\\
\left.-\cos(\frac{2\pi}{N} c^{(\sigma)}_k) \qty(\JC0(a^{(\sigma)}_k, x) \cos(\frac{2\pi}{N} p c^{(\sigma)}_k + x a^{(\sigma)}_k) + \JS0(a^{(\sigma)}_k, x) \sin(x a^{(\sigma)}_k + \frac{2\pi}{N} p c^{(\sigma)}_k))\right),
\end{multline}
where $c^{(\sigma)}_k$ denotes $c^{(\pm)}_k$ for even $N$ and $c^{(\mp)}_k$ for odd $N$,
while $a^{(\sigma)}_k \equiv \sin(\frac{2\pi}{N}c^{(\sigma)}_k)$.
All remaining notation is as in Theorem~\ref{theorem:series}.
\end{lemma}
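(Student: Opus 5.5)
The plan is to derive the alternative form directly from Theorem~\ref{theorem:series} by reindexing the finite sum over $k$, without solving the Cauchy problem again. The key observation is that each summand in (\ref{eq:series}) depends on $k$ only through the phase $\theta_k \equiv \frac{2\pi}{N} c^{(\pm)}_k$. Indeed, $a_k = \sin\theta_k$, $\cos(\frac{2\pi}{N}c_k) = \cos\theta_k$, and the trigonometric arguments are $p\theta_k - x a_k$. So I will look for a bijection $k \mapsto k'$ of $\{0,\dots,N-1\}$ with
\begin{equation*}
\theta^{(\pm)}_{k} \equiv \pi + \theta^{(\sigma)}_{k'} \pmod{2\pi}.
\end{equation*}
Under this shift one has $a^{(\pm)}_k = -a^{(\sigma)}_{k'}$, $\cos\theta^{(\pm)}_k = -\cos\theta^{(\sigma)}_{k'}$, and $p\theta^{(\pm)}_k = p\pi + p\theta^{(\sigma)}_{k'}$ modulo $2\pi p$.

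First I check that the bijection exists, splitting by parity of $N$. For even $N$, $\pi = \frac{2\pi}{N}\cdot\frac N2$, so I take $c^{(\pm)}_k = c^{(\pm)}_{k'} + \frac N2$ with $k' = k - \frac N2 \bmod N$; here $\sigma=\pm$. For odd $N$, $\frac N2$ is a half-integer, so adding it exchanges the integer grid $c^{(+)}$ and the half-integer grid $c^{(-)}$. Concretely, $k + \frac N2 = (k + \frac{N-1}{2}) + \frac12$ and $k+\frac12+\frac N2 = k + \frac{N+1}{2}$. This gives $c^{(\pm)}_k \equiv c^{(\mp)}_{k'} + \frac N2 \pmod N$, which explains $\sigma=\mp$.

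Next I substitute into each term. The $\delta_{p0}$ term is untouched. The free term gives
\begin{equation*}
\cos(p\theta_k - x a_k) = \cos(p\pi + p\theta'_{k'} + x a'_{k'}) = (-1)^p\cos(p\theta'_{k'} + x a'_{k'}).
\end{equation*}
In the bracket I use Lemma~\ref{lem:ilhi}: $\JC0(-a,x)=\JC0(a,x)$ and $\JS0(-a,x)=-\JS0(a,x)$. Then $\JC0\cos(\cdot)$ becomes $(-1)^p\JC0(a',x)\cos(p\theta'+xa')$. The term $\JS0(a,x)\sin(xa - p\theta)$ becomes $-\JS0(a',x)\sin(-xa' - p\pi - p\theta')$, which equals $(-1)^p\JS0(a',x)\sin(xa'+p\theta')$. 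Finally, the prefactor $\cos\theta_k = -\cos\theta'_{k'}$ supplies the minus sign in front of the bracket in (\ref{eq:series_alt}). Summing over $k'$ in place of $k$ completes the proof.

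The main obstacle is the sign bookkeeping, specifically the odd-$N$ modular shift. One must confirm that the phase shift is exactly $\pi$ modulo $2\pi$ even when $k'$ wraps around. This is harmless because $p$ is an integer, so $p\theta$ is only changed by multiples of $2\pi$, and $a$ and $\cos\theta$ are $2\pi$-periodic. I would verify the case $N=1$ against Table~\ref{tab:neumann_1} as a sanity check.
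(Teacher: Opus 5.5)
Your proof is correct. It follows the same overall strategy as the paper: reindex the finite $k$-sum of Theorem~\ref{theorem:series} by a bijection of the phase grid, then use periodicity. The difference is the choice of bijection. The paper uses the reflection $k\mapsto N/2-k$ for even $N$, and $k\mapsto\lfloor N/2\rfloor-k$ with a switch of grid for odd $N$, taken modulo $N$; that is, $\theta\mapsto\pi-\theta$. Under this map $a_k=\sin\theta_k$ is literally unchanged, so $J_{c_0}(a,x)$ and $J_{s_0}(a,x)$ carry over term by term. One only has to track the sign of $\cos\theta$ and the shift $p\theta\mapsto p\pi-p\theta$, using periodicity of sine and cosine. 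Your half-turn translation $\theta\mapsto\theta+\pi$ flips the sign of $a$, so you also need the parity of $J_{c_0}$ and $J_{s_0}$ in $a$ from Lemma~\ref{lem:ilhi}. Your sign bookkeeping there is right: the minus sign from $J_{s_0}(-a',x)$ and the one from $\sin(-xa'-p\theta'-p\pi)=-(-1)^p\sin(xa'+p\theta')$ combine correctly. The two routes differ exactly by $\theta\mapsto-\theta$, and each summand of (\ref{eq:series}) is invariant under that map precisely because of Lemma~\ref{lem:ilhi}. So the paper's choice is slightly more economical in prerequisites. In exchange, your version handles the wrap-around more cleanly: stating $c^{(\pm)}_k\equiv c^{(\sigma)}_{k'}+N/2 \pmod N$ and noting that $p$ is an integer avoids the paper's split of the sum into two index ranges.
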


\begin{proof}
First, note the following properties for $a^{(\pm)}_k$, which follow from the definition and properties of the sine function.
When $N$ is even, then
\begin{equation}
\label{eq:a_even_plus}
a^{(+)}_k = a^{(+)}_{N/2 - k} = a^{(+)}_{3N/2 - k},
\end{equation}
and
\begin{equation}
\label{eq:a_even_minus}
a^{(-)}_k = a^{(-)}_{N/2 - 1 - k} = a^{(-)}_{3N/2 -1 - k}.
\end{equation}
When $N$ is odd, then
\begin{equation}
\label{eq:a_odd}
a^{(\pm)}_k = a^{(\mp)}_{\lfloor N/2 \rfloor - k} = a^{(\mp)}_{N + \lfloor N/2 \rfloor - k},
\end{equation}
where $\lfloor \cdot \rfloor$ denotes the floor operation.

Second, we reorder the summation in equation~(\ref{eq:series}) according to the following pattern.
When $N$ is even, then
\begin{equation}
\sum_{k=0}^{N-1} f^{(+)}_k\qty(a_k, x) = \sum_{j=0}^{N/2} f^{(+)}_{N/2-j}\qty(a_j, x) + \sum_{j=N/2+1}^{N-1} f^{(+)}_{3N/2-j}\qty(a_j, x),
\end{equation}
\begin{equation}
\sum_{k=0}^{N-1} f^{(-)}_k\qty(a_k, x) = \sum_{j=0}^{N/2-1} f^{(-)}_{N/2-1-j}\qty(a_j, x) + \sum_{j=N/2}^{N-1} f^{(-)}_{3N/2-1-j}\qty(a_j, x).
\end{equation}
When $N$ is odd, then
\begin{equation}
\sum_{k=0}^{N-1} f^{(\pm)}_k\qty(a_k, x) = \sum_{j=0}^{\left\lfloor N/2 \right\rfloor-1} f^{(\mp)}_{\left\lfloor N/2 \right\rfloor-j}\qty(a_j, x) + \sum_{j=\left\lfloor N/2 \right\rfloor}^{N-1} f^{(\mp)}_{N + \left\lfloor N/2 \right\rfloor-j}\qty(a_j, x).
\end{equation}
where $f^{(\pm)}$ denotes the summand in equation~(\ref{eq:series}) and properties~(\ref{eq:a_even_plus})--(\ref{eq:a_odd}) have been applied.
Applying the periodicity of the cosine and sine functions yields the result stated in the lemma.
\end{proof}

\subsection{Symmetry of the result}
A useful check of equation~(\ref{eq:series}) is to verify its symmetry.
By the parity property of the Bessel function $J_{\nu}$, the following identity must hold:
\begin{equation}
\sum_{m=0}^{\infty} \qty(\pm 1)^m J_{Nm+p}(-x) = \begin{cases}
\displaystyle (-1)^p \sum_{m=0}^{\infty} \qty(\pm 1)^m J_{Nm+p}(x), & \text{when $N$ is even;}\\
\displaystyle (-1)^p \sum_{m=0}^{\infty} \qty(\mp 1)^m J_{Nm+p}(x), & \text{when $N$ is odd.}\\
\end{cases}
\end{equation}
This identity follows directly from Lemma~\ref{lem:series_alt}.
Evaluating $S^{(\pm)}_{N,p}(-x)$ using equation~(\ref{eq:series_alt}) and applying relations~(\ref{eq:jc0_mx}) and~(\ref{eq:js0_mx}), one verifies that the result coincides with $(-1)^p S^{(\pm)}_{N,p}(x)$ as given by equation~(\ref{eq:series}).

\subsection{Bilateral series}
We also use Lemma~\ref{lem:series_alt} to compare the one-sided series with bilateral series.
A generic expression for bilateral Bessel $J_\nu$ series was derived in~\cite{Sung2022}:
\begin{align}
\label{eq:sung_1}
\sum_{m=-\infty}^{\infty} J_{N m+p}(x) &= \frac{1}{N}\sum_{q=0}^{N-1} e^{ix\sin(2\pi q/N)}e^{-i2\pi pq/N }\\
\label{eq:sung_2}
\sum_{m=-\infty}^{\infty} (-1)^m J_{N m+p}(x) &= \frac{1}{N} \sum_{q=0}^{N-1}e^{ix\sin((2q+1)\pi/N)}e^{-i(2q+1)\pi p/N}.
\end{align}
Note that equations~(\ref{eq:sung_1})--(\ref{eq:sung_2}) can be further simplified by observing that only the real part of the complex exponential sum survives:
\begin{equation}
\label{eq:series_bi}
\sum_{m=-\infty}^{\infty} \qty(\pm 1)^m J_{N m+p}(x) = \frac{1}{N}\sum_{q=0}^{N-1} \cos(x a^{(\pm)}_q - \frac{2\pi}{N} p c^{(\pm)}_q).
\end{equation}

Notably, the bilateral series result is free of special functions and involves only trigonometric functions.
Equation~(\ref{eq:series_bi}) can also be derived independently by means of the following lemma.
\begin{lemma}[Bilateral series]
\label{lem:series_bi}
For any $N \in \mathbb{N}$, $1 \le p < N$ equation~(\ref{eq:series_bi}) holds.
\end{lemma}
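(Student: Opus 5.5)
The plan is to derive equation~(\ref{eq:series_bi}) by rerunning the ODE argument of Lemma~\ref{lem:S_Cauchi} on the bilateral series. Boundary terms no longer appear, so the system becomes homogeneous. Define
\[
B^{(\pm)}_{N,p}(x) \equiv \sum_{m=-\infty}^{\infty} (\pm 1)^m J_{Nm+p}(x), \qquad 0 \le p < N,
\]
and collect these into a vector $\vb{B^{(\pm)}_N}(x)$.

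\textbf{Step 1 (convergence and differentiation).} The series converges absolutely and locally uniformly, together with its termwise derivative, because $|J_n(x)| \le (|x|/2)^{|n|}/|n|!$. Differentiating termwise with $J_\nu' = \tfrac12(J_{\nu-1}-J_{\nu+1})$ gives $B'_{N,p} = \tfrac12(B_{N,p-1}-B_{N,p+1})$. For indices outside $0,\dots,N-1$ we use the identity $B_{N,p+N} = (\pm1) B_{N,p}$, which comes from the shift $m \to m+1$. This wrap-around produces exactly the corner entries $\pm\tfrac12$ and $\mp\tfrac12$ of $A^{(\pm)}_N$. So no forcing terms $\vb{f}$ arise, and the system is simply
\[
\dv{\vb{B^{(\pm)}_N}}{x} = A^{(\pm)}_N \vb{B^{(\pm)}_N}.
\]
This is the point where the bilateral case differs from Lemma~\ref{lem:S_Cauchi}: the negative-index terms $J_{-1}$ and $J_{-N}$, which previously generated $\vb{f^{(\pm)}_N}$, are now absorbed into the sums themselves.

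\textbf{Step 2 (initial condition).} As in Lemma~\ref{lem:S_zero}, $J_n(0)=\delta_{n0}$, and $Nm+p=0$ with $0\le p<N$ forces $m=0$ and $p=0$. Hence $\vb{B^{(\pm)}_N}(0)=\vb{e_0}$.

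\textbf{Step 3 (solution).} The solution is $\vb{B^{(\pm)}_N}(x) = V^{(\pm)}\exp(x\Lambda^{(\pm)})V^{(\pm)H}\vb{e_0}$. Its $p$-th component was already computed in the proof of Theorem~\ref{theorem:series}:
\[
\frac1N\sum_{k=0}^{N-1}\exp\!\left(\frac{2\pi i}{N}p c^{(\pm)}_k - i a^{(\pm)}_k x\right).
\]
Since $A^{(\pm)}_N$ and $\vb{e_0}$ are real, the result is real, so we may take the real part. Using that cosine is even, this yields equation~(\ref{eq:series_bi}).

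The main obstacle is Step~1's bookkeeping of the sign at the wrap-around for the skew-circulant case. One must check that $B_{N,-1} = (\pm 1)^{-1}B_{N,N-1} = \pm B_{N,N-1}$ and $B_{N,N} = \pm B_{N,0}$ reproduce precisely the entries of~(\ref{eq:A}). I would verify this directly by the index shift $m\to m\mp1$. A secondary point is confirming that the eigenvector normalization of Lemma~\ref{lem:A_eigen} makes $V^{(\pm)}$ unitary, so that $V^{(\pm)H}=V^{(\pm)-1}$; this holds because these are the standard (skew-)DFT vectors. The restriction $1\le p<N$ in the statement is harmless, since the argument covers $p=0$ as well.
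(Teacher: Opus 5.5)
Your proof is correct, but it takes a different route from the paper's.

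\textbf{The paper's route.} The paper never writes an equation for the bilateral sums. It splits $\sum_{m\in\mathbb{Z}}$ into two pieces: the one-sided series with index $p$, and, via $J_{-\nu}=(-1)^\nu J_\nu$, a one-sided series with index $N-p$. It evaluates the first piece with (\ref{eq:series}) and the second with the alternative form (\ref{eq:series_alt}). It then checks that the incomplete Lipschitz--Hankel terms cancel while the cosines combine.

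\textbf{Your route.} You observe that for bilateral sums the wrap-around identity $B_{N,p+N}=\pm B_{N,p}$ is exact. Hence the system of Lemma~\ref{lem:S_Cauchi} becomes homogeneous, with the same matrix $A^{(\pm)}_N$ and the same initial vector $\vb{e_0}$, so that $\vb{B^{(\pm)}_N}(x)=\exp(xA^{(\pm)}_N)\vb{e_0}$.

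\textbf{What your route buys.} It is shorter and self-contained. It needs no partial sums, no limit $n\to\infty$, and no one-sided closed form. It proves the convergence that the paper only cites. It also covers $p=0$; the restriction $1\le p<N$ is an artifact of the paper's method, which needs $N-p$ to lie in $1,\dots,N-1$. Finally, comparing with (\ref{eq:S_N}), it explains the structure of the main result: the one-sided series equals the bilateral series plus the Duhamel integral of $\vb{f^{(\pm)}_N}$.

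\textbf{What the paper's route buys.} It serves as validation. Because it passes through (\ref{eq:series}) and (\ref{eq:series_alt}), recovering Sung's formula is a nontrivial consistency check of the one-sided results, including their incomplete Lipschitz--Hankel terms. Your derivation never touches those terms, so it does not test them.

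\textbf{Minor correction to your commentary.} In the one-sided system, only the forcing $-\frac12 J_1(x)\vb{e_0}$ comes from a negative-index term, namely $J_{-1}$. The forcing $\pm\frac12 J_0(x)\vb{e_{N-1}}$ comes from the $J_0$ term missing from the shifted sum $\sum_{m\ge0}(\pm1)^m J_{N(m+1)}$, not from $J_{-N}$. This does not affect your argument.
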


\begin{proof}
Since the bilateral series is known to converge, we decompose it as
\begin{multline}
\sum_{m=-\infty}^{\infty} \qty(\pm 1)^m J_{N m+p}(x) = \sum_{m=0}^{\infty} \qty(\pm 1)^m J_{N m+p}(x) \\
\pm \qty(-1)^{N-p} \sum_{m=0}^{\infty} \qty(-1)^{Nm} \qty(\pm 1)^m J_{N m+\qty(N-p)}(x),
\end{multline}
which follows from the general Bessel function property $J_{-\nu}(x) = \qty(-1)^\nu J_{\nu}(x).$
The first term is evaluated using equation~(\ref{eq:series}), and the second term using equation~(\ref{eq:series_alt}).
Note that the second term has $N-p$ in place of $p$, and its contribution can be further simplified using the periodicity of the cosine and sine functions:
\begin{equation}
\cos(\frac{2\pi}{N}\qty(N-p)c^{(\pm)}_k + \phi) = \pm \cos(\frac{2\pi}{N}pc^{(\pm)}_k - \phi),
\end{equation}
\begin{equation}
\sin(\frac{2\pi}{N}\qty(N-p)c^{(\pm)}_k + \phi) = \pm \sin(\phi - \frac{2\pi}{N}pc^{(\pm)}_k).
\end{equation}
Upon adding the two expressions, the incomplete Lipschitz-Hankel integral terms cancel, while the cosine terms combine to yield equation~(\ref{eq:series_bi}).
\end{proof}

\subsection{Comparison with existing literature}
Tables~\ref{tab:neumann_1} and~\ref{tab:neumann_2} include references to known sources for the special cases listed.
Note that the results are consistent with the different notation used for $\JC0(0, x)$ across various authors.
The more specialized tables~\cite{Prudnikov1986} contain two additional results that fall outside the scope of our tables: (5.7.1)~.21 and (5.7.1)~.22.
These can also be obtained from equation~(\ref{eq:series}) up to differences due to trigonometric transformations:
\begin{equation}
\sum_{m=0}^{\infty} J_{8m}(x) = \frac{1}{8} \qty(\cos(x)+2 \cos(\frac{x}{\sqrt{2}})+4 J_0(x)+1),
\end{equation}
\begin{equation}
\sum_{m=0}^{\infty} J_{8m+4}(x) = \frac{1}{8} \qty(\cos(x)-2 \cos(\frac{x}{\sqrt{2}})+1),
\end{equation}
where~\cite{Prudnikov1986} uses the substitution $\cos(x)+1 = 2\cos^2(\frac{x}{2})$.

\subsection{Numerical evaluation}
We also computed $\sum_{m=0}^{\infty} \qty(\pm 1)^m J_{N m+p}(x)$ numerically for fixed values of $N$ and $p$ using both direct summation and equation~(\ref{eq:series}).\footnote{\url{https://github.com/matwey/neumann_series}}
The multiprecision library {\tt mpmath}~\cite{mpmath} for Python was used for this purpose.
The numerical results were found to agree within six decimal digits.
Since incomplete Lipschitz-Hankel integrals of Bessel functions $\JC0(a,x)$ and $\JS0(a,x)$ are not readily available in {\tt mpmath},
the Kampé~de~Fériet representation was employed~\cite{Miller1989}:
\begin{equation}
J_{c_{\mu,\nu}}(a, z) = \frac{z^{1+\mu+\nu}}{2^\nu (1+\mu+\nu) \Gamma(\nu+1)} 
F{}^{1:0;0}_{1:1;1}\left(
\setlength{\arraycolsep}{0pt}
\begin{array}{c@{{}:{}}c@{{};{}}c@{{};}}
\frac{1+\mu+\nu}{2} & - & - \\
\frac{3+\mu+\nu}{2} & 1+\nu & \frac{1}{2}
\end{array}
-\dfrac{z^2}{4}, -\dfrac{(a z)^2}{4}
\right),
\end{equation}
\begin{equation}
J_{s_{\mu,\nu}}(a, z) = \frac{a z^{2+\mu+\nu}}{2^\nu (2+\mu+\nu) \Gamma(\nu+1)}
F{}^{1:0;0}_{1:1;1}\left(
\setlength{\arraycolsep}{0pt}
\begin{array}{c@{{}:{}}c@{{};{}}c@{{};}}
\frac{2+\mu+\nu}{2} & - & - \\
\frac{4+\mu+\nu}{2} & 1+\nu & \frac{3}{2}
\end{array}
-\dfrac{z^2}{4}, -\dfrac{(a z)^2}{4}
\right),
\end{equation}
where
\begin{equation}
J_{c_{\mu,\nu}}(a, z) = \int_0^z \cos(a t) t^\mu J_\nu(t) dt,
\end{equation}
\begin{equation}
J_{s_{\mu,\nu}}(a, z) = \int_0^z \sin(a t) t^\mu J_\nu(t) dt.
\end{equation}
This is probably not the most efficient method for computing incomplete Lipschitz-Hankel integrals of Bessel functions, but it is adequate for the present purposes when used with a multiprecision library.

\section{Conclusion}
\label{sec:conclusion}

We have derived generic expressions for the one-sided series
$\sum_{m=0}^{\infty} (\pm 1)^m J_{Nm+p}(x)$
valid for arbitrary positive integer $N$ and integer $0 \le p < N$.
The derivation proceeds by reformulating the partial sums as the solution of a nonhomogeneous linear ODE system with constant coefficients, whose matrix exponential is evaluated via eigendecomposition.
The resulting expressions are given in terms of incomplete Lipschitz-Hankel integrals of Bessel functions of the first kind.

Special cases of these series have appeared previously in reference tables~\cite{Gradshteyn2014,Abramowitz1965,Prudnikov1986}, and we have verified the consistency of our general formula with those entries.
The relation between one-sided and bilateral series was also established, recovering the result of~\cite{Sung2022} as a special case.

The Kamp\'e de F\'eriet representation used in this work, while suitable for multiprecision computation, may not be optimal for large-scale applications; fortunately, specific algorithms for numerical evaluation of the incomplete Lipschitz-Hankel integrals are readily available in the literature.

The other direction for future investigation is to check reducibility of the incomplete Lipschitz-Hankel integrals for $a_k=\sin(\frac{2\pi}{N}k)$.
Here we consider only a subset of all possible $a$ values, so additional reducibility may potentially be available.

\printbibliography

\end{document}